\newtheorem{definition}{Definition}
\newtheorem{proposition}{Proposition}
\newtheorem{proof}{Proof}
\begin{document}
\begin{frontmatter}

\title{Accurate control to run and stop chemical reactions via relaxation oscillators} 

\author[First]{Xiaopeng Shi} 
\author[First]{Chuanhou Gao} 
\author[second]{Denis Dochain}

\address[First]{School of Mathematical Sciences, Zhejiang University, Hangzhou, 310058,China (e-mail: \{12035033; gaochou\}@zju.edu.cn)}
\address[second]{ICTEAM, UCLouvain, B ˆatiment Euler, avenue
Georges Lemaˆıtre 4-6, 1348 Louvain-la-Neuve, Belgium (e-mail: de-
nis.dochain@uclouvain.be)}

\begin{abstract}                
Regulation of multiple reaction modules is quite common in molecular computation and deep learning networks construction through chemical reactions, as is always a headache for that sequential execution of modules goes against the intrinsically parallel nature of chemical reactions.
Precisely switching multiple reaction modules both on and off acts as the core role in programming chemical reaction systems. Unlike setting up physical compartments or adding human intervention signals, we adopt the idea of chemical oscillators based on relaxation oscillation, and assign corresponding clock signal components into the modules that need to be regulated. 
This paper mainly demonstrates the design process of oscillators under the regulation task of three modules, and provides a suitable approach for automatic termination of the modules cycle. We provide the simulation results at the level of ordinary differential equation and ensure that equations can be translated into corresponding chemical reaction networks. 
\end{abstract}

\begin{keyword}
Relaxation Oscillation, Chemical Reaction Network, Dynamical System, cycle termination
\end{keyword}

\end{frontmatter}

\section{Introduction}
The engineering of biological systems is usually based on modular and hierarchical design, wherein a system is described as composition of simpler subsystems whose properties are fully understood (\cite{qian2018programming}). The resulting molecular computation can not avoid a key problem, that is, how can subsystems (or modules) be automatically sequenced and continue to perform their respective functions after being splintered into a whole system. A naive idea is to set up physical compartments to separate the modules (\cite{blount2017feedforward}), but this does not always work in a biochemical context, especially when applied to cellular programming and synthetic gene circles (\cite{qian2018programming}). Another design that is more suitable for biochemical environment is to build chemical oscillators in order to generate periodic intervention clock signals (\cite{jiang2011,vasic2020,9707599,shi2022design}). These intervention signals serve in the biochemical system as catalysts and regulate the switch of the module according to the periodic fluctuations of their concentrations.\\
\\
Just like stability and persistence in chemical and biological environment (\cite{zhang2022stability,zhang2020persistence}), oscillation, as a critical dynamical property, has also been studied for a long time (\cite{tyson2013,forger2017biological,gonze2021goodwin}). When it comes to design chemical oscillators to switch modules, we have a harsh choice of how the oscillation behaves. The chemical oscillator models used in many previous works either lack a clear understanding of the oscillation mechanism (\cite{jiang2011}), or are too radical in the choice of parameters and initial values (\cite{9707599}). In the practical requirements of programming chemical reactions, we often need to have a clear grasp of the period of the clock signals, and this determines how to allocate a reasonable execution time for a module according to the speed at which the reactions in the module reach equilibrium. Choice of parameters and initial concentration of components also need to have certain manoeuverability and robustness in practice. Another problem is that most work introducing chemical oscillators only consider how to make the oscillator guide the cycle of modules to occur, while few pay attention on termination of cycle. \\
\\
In our module design, each chemical reaction module corresponds to a calculation instruction similar to the one in computer science (\cite{vasic2020}), and the clock signal demarcates the execution order of these reaction instructions, so cycle termination is an integral part of the instruction. However, it is natural that an oscillation cannot terminate itself, otherwise it would not be an oscillation. We need to prepare a strategy for the system to terminate oscillations on its own while introducing chemical oscillators for module regulation.
In our previous work, we proposed a chemical oscillator structure based on relaxation oscillation, and obtained satisfactory results in the oscillation mechanism and parameter selection (\cite{shi2022design}). However, that work only focused on the adjustment task of two modules and failed to solve the problem of automatic cycle termination well. In this article we explain that the number of cycle executions should also be calibrated separately as a reaction module, so the regulation task faced by chemical oscillators is intrinsically multi-module, which
is also common in biological environment.\\
\\
Our discussion expands from ordinary differential equations(ODEs) for which we select the appropriate oscillator based on the oscillatory structure of the ODEs system. We require that the state points of ODEs that correspond to concentration of chemical components, are in the first quadrant, and then values of these elements corresponding to intervention signals can sufficiently go close to zero on the low amplitude segment. Based on these considerations, we choose relaxation oscillation as the basic architecture of our model. On the one hand, it is easy to make corresponding design based on the clear oscillation mechanism, and on the other hand, it helps to guarantee the robustness of the oscillators (\cite{krupa2001relaxation}). Besides, we try to ensure that the established ODEs are in specific polynomial form so that they can be translated back into chemical reaction networks (CRNs) through \textit{mass-action kinetics}. Research on properties of chemical reaction networks and their computational power has also been fruitful (\cite{wu2020lyapunov,fages2017strong,chalk2019composable,vasic2020}). DNA strand displacement cascades can be utilized to finally implement CRNs into real chemistry (\cite{soloveichik2010dna}), so our work focuses on the design of oscillators at the dynamical system level.\\
\\
This paper is organized as follows. Basic concepts about CRN and our relaxation oscillator model is given in \uppercase\expandafter{\romannumeral2}. Section \uppercase\expandafter{\romannumeral3} exhibits the main results of constructing relaxation oscillators for three modules and approach for cycle termination. Finally, section \uppercase\expandafter{\romannumeral4} is dedicated to conclusion and discussion of the whole paper.

\section{Basic concept}
In this section, some basic concepts of CRNs and relaxation oscillator model we use are provided.

\subsection{Chemical Reaction Networks}
Chemical reaction networks often consist of $n$ species $X_{1},...,X_{n}$ and $m$ reactions $R_{1},...,R_{m}$ in which a reaction is just like
\begin{equation*}
R_{i}: a_{i1}X_{1}+\cdots +a_{in}X_{n} \overset{k_{i}}{\rightarrow} b_{i1}X_{1}+\cdots +b_{in}X_{n}\ ,
\end{equation*}
for $k_{i}$ is the rate constant of this reaction.\\
It is customary to refer to species by a capital letter and to denote the concentration of that species by the corresponding lowercase letter. For example, $x_{1}=x_{1}(t)$ refers to concentration of species $X_{1}$ along time $t$. Then the kinetic model of species set ${X_{1},...,X_{n}}$ can be expressed as
\begin{equation*}
    \dot{\textbf{x}} = \Gamma \cdot v\left(\textbf{x}\right).
\end{equation*}
where $\Gamma$ usually refers to coefficient matrix and satisfies $\Gamma_{ij} = b_{ij}-a_{ij}$, while rate function $v\left(\textbf{x}\right)$ depends on kinetic assumption. With \textit{mass-action kinetics}, the rate function is $v\left (\textbf{x} \right ) = \left ( k_{1}\prod_{i=1}^{n}x_{i}^{a_{i1}},...,k_{m}\prod_{i=1}^{n}x_{i}^{a_{im}}  \right )^{\top}$.

It is important to underline that the kinetic assumption we choose restrict the expression of ODEs to polynomials with specific rules. Therefore, special attention should be paid when designing the oscillator model. Obviously, there is no reaction network corresponds to equation like $\dot{x}= x-y$ for that species $X$ cannot be consumed if it fails to appear in the left of reaction (\cite{hangos2011mass}). we modify the possible similar structures when designing the relaxation oscillator model, as we will exhibit in the next subsection.

\subsection{Relaxation Oscillator Model}
Relaxation Oscillations are a type of periodic solutions found in singular systems and ubiquitous in systems modelling chemical and biological phenomena (\cite{mishchenko2013differential,murray2002mathematical,epstein1998introduction}), related research on complex dynamic behavior, including canard explosion (\cite{krupa2001relaxation}) and synchronization (\cite{fernandez2020symmetric}), is also flourished. Relaxation oscillation is robust to the initial point selection, while we still have to adapt the common relaxation oscillation structure to suit our design needs. Another worth noticing is that relaxation oscillators can not directly satisfy the requirements for intervention signals, as it is difficult to get the oscillator close enough to zero in the low amplitude segment. Based on these considerations, we adjust the structure of the 2-dimension relaxation oscillation model and couple it with a symmetric truncated subtraction operation system. Model presented in this paper is similar to the one we constructed in (\cite{shi2022design}) as follows:

 \begin{equation}
    \begin{aligned}
    \frac{\mathrm{d} x}{\mathrm{d} t} &= \eta_{1}(-x^3+6x^2-9x+5-y)x/\epsilon\ , \\
    \frac{\mathrm{d} y}{\mathrm{d} t} &= \eta_{1}(x-\rho)y\ , \\
    \frac{\mathrm{d} u}{\mathrm{d} t} &= \eta_{2}(p-u-cuv)\ , \\
    \frac{\mathrm{d} v}{\mathrm{d} t} &= \eta_{2}(x-v-cuv)\ .
    \end{aligned}
\end{equation}

Subsystem $\sum_{xy}$ is a specific relaxation oscillation model rewritten from (\cite{krupa2001relaxation}) in which We uniformly multiply the corresponding variable on the right side of ODEs to satisfy the requirement of transforming the equation back into CRN according to \textit{mass-action kinetics}. The equilibrium points thus introduced do not affect the expression of the phase plane portrait in the strictly first quadrant which is shown in Fig.1. Parameter $\epsilon$ is small enough to ensure the existence of relaxation oscillation whose trajectory convergences to the one consisting of both solid and dashed red lines in Fig.1. Parameter $\eta_{1}$ and $\rho$ respectively control overall length of oscillatory period of $x$ and the ratio between high and low amplitude segments. \\
 \begin{figure}[ht]
 	\centerline{\includegraphics[width=\columnwidth]{./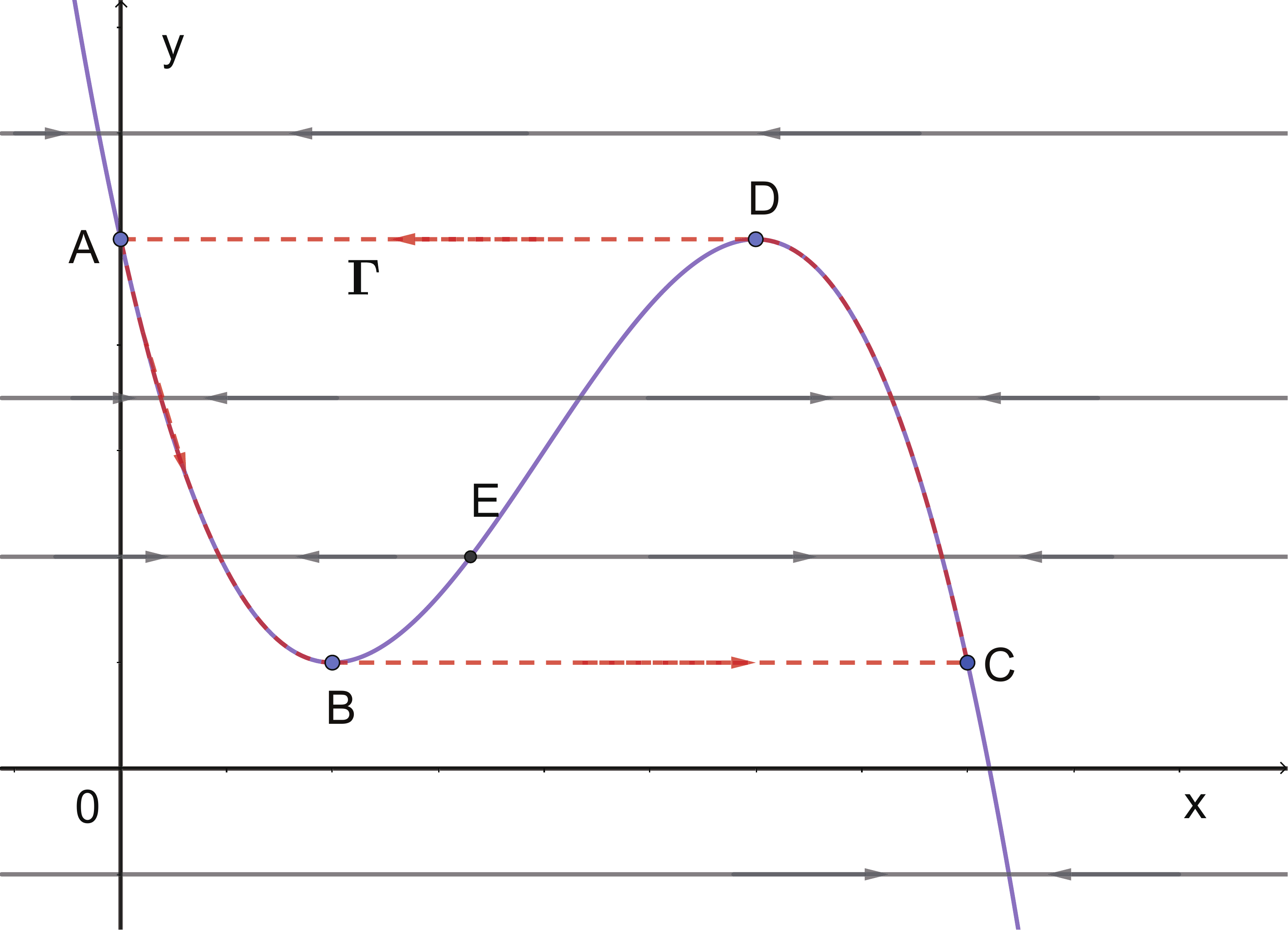}}
 	\caption{Phase plane portrait of subsystem $\sum_{xy}$.}
 	\label{fig1}
 \end{figure}
 \\
Subsystem $\sum_{uv}$ is a truncated subtraction module adjusted from the one in (\cite{vasic2020}). We use this subsystem to set the value of $x$ close to zero at low amplitudes via a large enough parameter $c$. The structure of the subsystem $\sum_{uv}$ shows a certain degree of symmetry, so as to ensure that $u$ and $v$ can act as a pair of symmetric clock signals while they are affected by the oscillatory element $x$ and oscillate along with it(See (\cite{shi2022design})). Parameters $\eta_{2}$ and $p$ determine the response and amplitudes of $u$ and $v$.\\
\\
What needs to be emphasized is that we only restrict the value of the parameters to a certain extent, and each parameter is chosen to be explained accordingly. However, selection of initial points is free for that as long as point E is avoided (see Fig.1), the initial points in the first quadrant will cause the trajectory to converge to the periodic orbit of relaxation oscillation (\cite{shi2022design}). This ensures the feasibility of our oscillator model in practical applications. With $\eta_{1}=0.1, \epsilon=0.001, \rho=2.1, \eta_{2}=10, p=2, c=5000$ and initial point $(1,1,0,0)$, we get the simulation diagram in Fig.2.\\
 \begin{figure}[ht]
 	\centerline{\includegraphics[width=\columnwidth]{./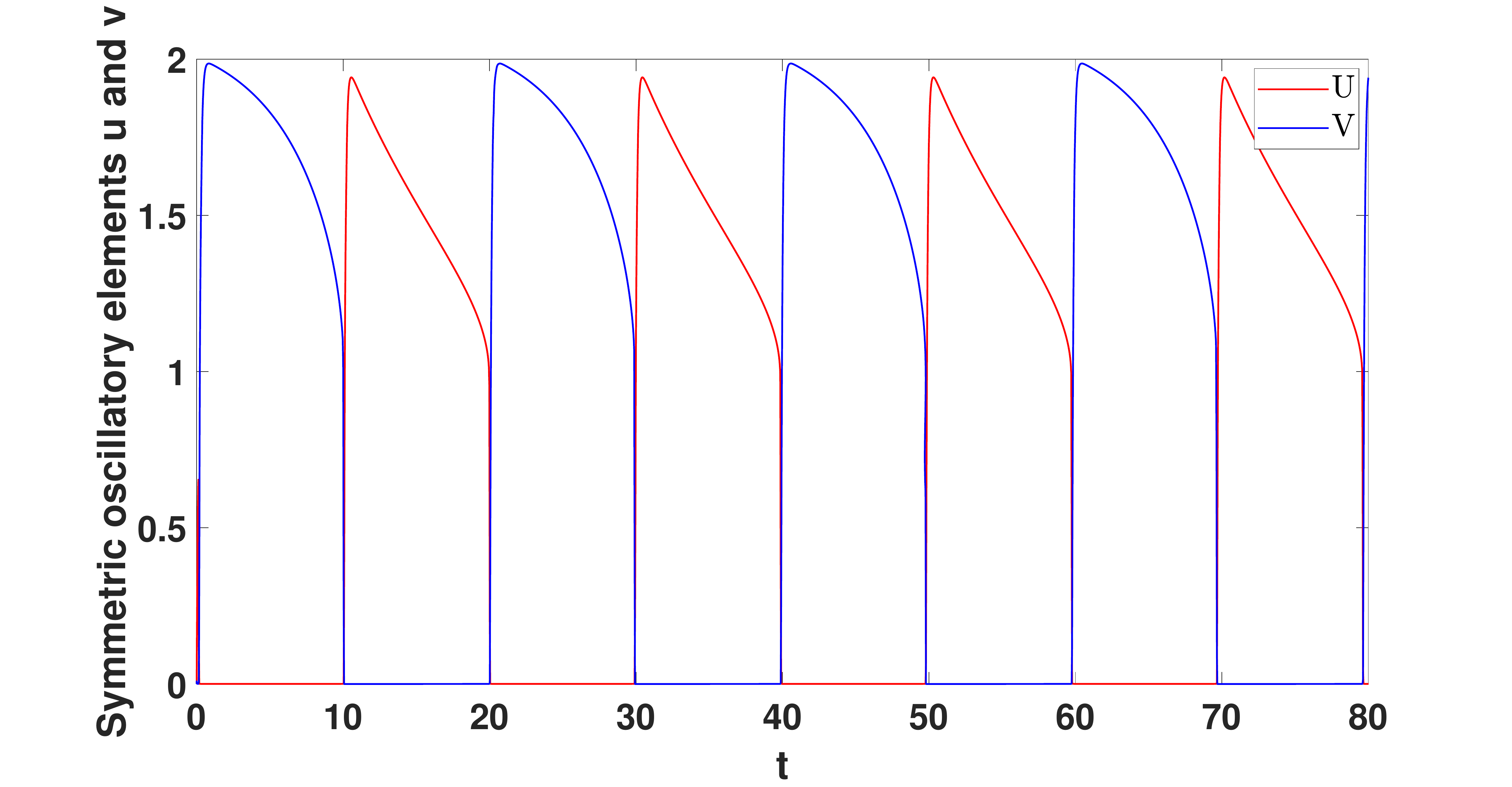}}
 	\caption{Simulation diagram of $u$ and $v$.}
 	\label{fig2}
 \end{figure} 
\\
Corresponding CRNs are shown as follows:
\begin{align*}
    4X&\overset{\eta_{1}/\epsilon }{\rightarrow}3X\ ,\\
3X&\overset{6\eta_{1}/\epsilon }{\rightarrow}4X\ , \\
2X&\overset{9\eta_{1}/\epsilon }{\rightarrow}X\ , \\
X&\overset{5\eta_{1}/\epsilon }{\rightarrow}2X\ , \\
X+Y&\overset{\eta_{1}/\epsilon }{\rightarrow}Y\ , \\
X+Y&\overset{\eta_{1}}{\rightarrow}2Y\ , \\
Y&\overset{\eta_{1}\rho}{\rightarrow}\varnothing\ , \\
\varnothing&\overset{\eta_{2}p}{\rightarrow}U\ , \\
U&\overset{\eta_{2}}{\rightarrow}\varnothing\ , \\
U+V&\overset{\eta_{2}c}{\rightarrow}V\ , 
\end{align*}
\begin{align*}
X&\overset{\eta_{2}}{\rightarrow}V+X\ , \\
V&\overset{\eta_{2}}{\rightarrow}\varnothing\ , \\
U+V&\overset{\eta_{2}c}{\rightarrow}U\ .
\end{align*}
and in the end of this subsection, we repeat the definition of (symmetric) clock signals just like (\cite{shi2022design}).

\begin{definition}
	A species $U$ is called clock signal for that its concentration $u(t)$ oscillates over time, which reaches zero or close enough during some part in a oscillation period and immediately goes beyond zero during the rest. 
\end{definition}
\begin{definition}
	A pair of clock signals $U$ and $V$ are called symmetric for that when $U$ oscillates to zero or close enough, $V$ goes strictly beyond zero, and vice versa.
\end{definition}

\section{Main Results}
In this section, we try to take advantage of system (1) and build oscillators for multiple reaction modules regulation. We view the designed oscillators as a mean of controlling the chemical system so that the chemical reactions, which are otherwise scattered and simultaneous, proceed in the desired order and terminate at any time we want. In principle, our design idea is applicable to any number of reaction modules regulation, but for simplicity of discussion, we only demonstrate the construction process of oscillators in the context of three modules regulation.

\subsection{Relaxation Oscillators for Three Modules}
In (\cite{shi2022design}), we tackled with the loop of two modules and used a pair of symmetric clock signals $U$ and $V$ according to elements $u$ and $v$ in system (1). When it turns to three modules and even more, an intuitive idea is to decompose the regulation task and construct several pairs of clock signals. Assume that we have three modules to be cycled as $Module_{1}$, $Module_{2}$ and $Module_{3}$, and in each loop these three modules perform the corresponding operations in turn (We shall provide a specific example in the next subsection). We can easily image that two pairs of clock signals have separately been constructed as system (1), then we utilize species $V_{1}$ as catalyst for overall reactions in $Module_{1}$ while $U_{1}$ for overall reactions in both $Module_{2}$ and $Module_{3}$. So in the first half of each period, concentration of $V_{1}$ (so as $v_{1}$ in ODEs) is strictly positive which starts $Module_{1}$, while concentration of $U_{1}$ goes close to zero, shutting down $Module_{2}$ and $Module_{3}$. Situation is reversed in the second half of the period. Based on this, we just call for the other pair of clock signals $U_{2}$ and $V_{2}$ to control the sequence of $Module_{2}$ and $Module_{3}$ when $Module_{1}$ is shutting down. We give a schematic diagram for this in Fig.3.\\
\begin{figure}[ht]
\subfloat[Schematic diagram of the clock signals inserted into the corresponding modules.]{\includegraphics[width=0.47\columnwidth]{./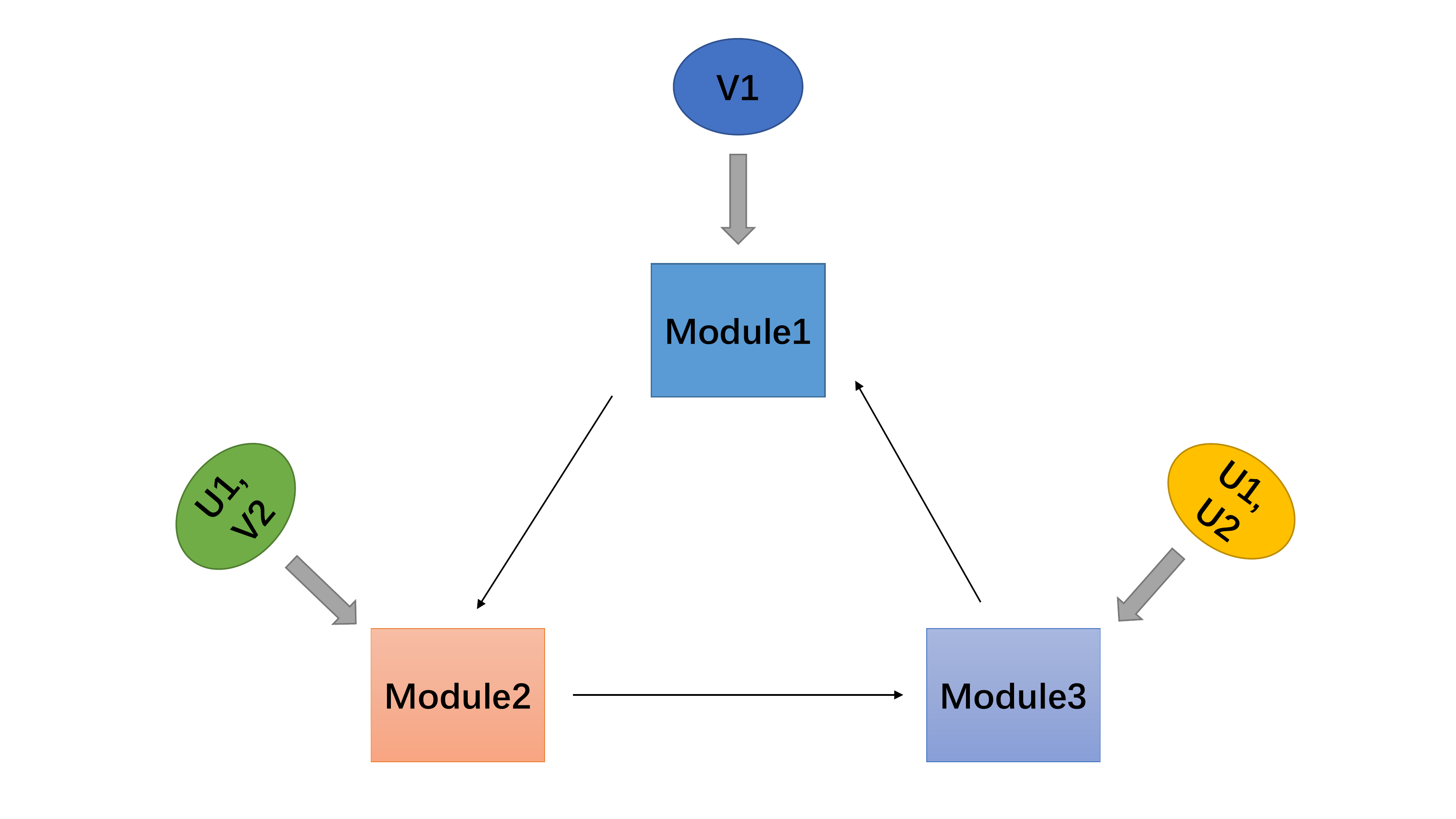}}
\hfill
\subfloat[Schematic diagram of module execution order.]{\includegraphics[width=0.47\columnwidth]{./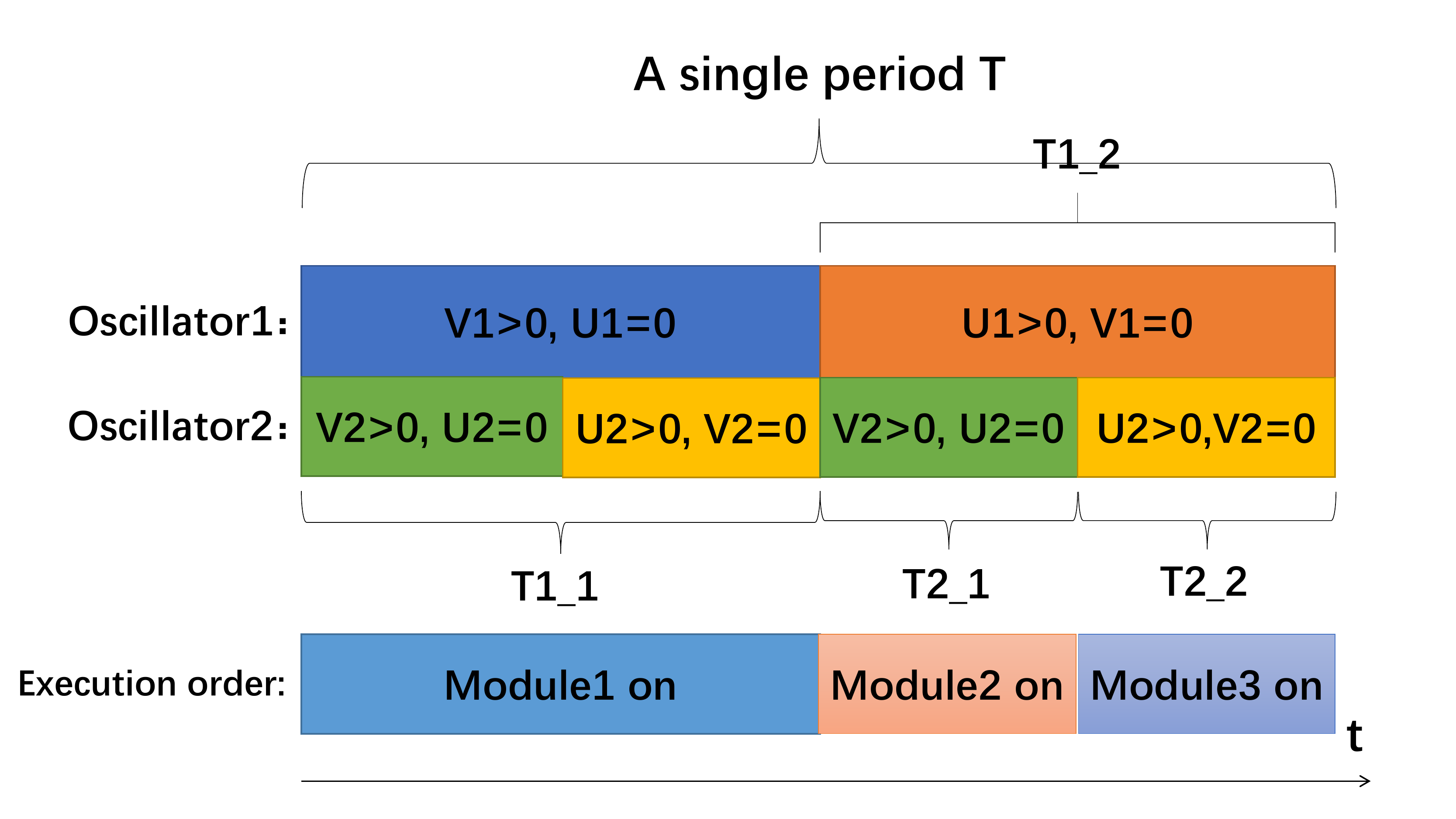}}
\caption{Schematic diagram for three-module regulation in a single period $T$.}
\end{figure}

Obviously, an important prerequisite for this idea to work is that the period of $u$ and $v$ in system (1) can be regulated. The following Proposition 1 is given to estimate this period.
\begin{proposition}\label{proposition1}
  With $\epsilon$ small enough and a large $\eta_{2}$, the period of $u$ and $v$ in system (1) is estimated as $T_{1}+T_{2}$ as follows:
\begin{align*}
    T_{1}\approx&\int_{4}^{3}\frac{-3x^{2}+12x-9}{\eta_{1}(x-\rho)(-x^{3}+6x^{2}-9x+5)}dx \ ,\\
    T_{2}\approx&\int_{0}^{1}\frac{-3x^{2}+12x-9}{\eta_{1}(x-\rho)(-x^{3}+6x^{2}-9x+5)}dx \ .
\end{align*}
\end{proposition}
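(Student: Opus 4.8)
The plan is to exploit the singular-perturbation structure of the subsystem $\sum_{xy}$. When $\epsilon$ is small enough, the trajectory of $(x,y)$ relaxes quickly onto the stable branches of the critical manifold $\{y = -x^3+6x^2-9x+5\}$ (the cubic-shaped curve of Fig.~1), spends almost all its time there, and executes fast horizontal jumps at the two fold points. On the slow branches the dynamics is governed, to leading order in $\epsilon$, by the reduced system obtained by setting $-x^3+6x^2-9x+5-y=0$: differentiating the constraint gives $(-3x^2+12x-9)\,\dot x = \dot y$, and substituting $\dot y = \eta_1(x-\rho)y = \eta_1(x-\rho)(-x^3+6x^2-9x+5)$ yields the scalar slow equation
\begin{equation*}
\dot x \;=\; \frac{\eta_1(x-\rho)(-x^3+6x^2-9x+5)}{-3x^2+12x-9}\,.
\end{equation*}
The period of the relaxation cycle is, to leading order, the total time spent traversing the two slow branches (the jump times being $O(\epsilon)$), so I would integrate $dt = dx/\dot x$ over each branch.

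Next I would identify the endpoints of the two slow segments as the fold points of the cubic $y=-x^3+6x^2-9x+5$, i.e.\ the roots of $-3x^2+12x-9=-3(x-1)(x-3)$, namely $x=1$ and $x=3$. The right (descending-in-$x$) branch is traversed from the upper fold at $x=4$ — the landing point of the jump off the fold $x=3$, since the cubic takes the same $y$-value at $x=3$ and $x=4$ (indeed $-x^3+6x^2-9x+5 = -(x-1)^2(x-4)+1$, so $x=3$ and... more precisely the horizontal jump from the fold at $x=3$ lands where the cubic regains that ordinate) — down to the lower fold at $x=3$; wait, more carefully: the two attracting branches are $x>3$ and $x<1$, the jump at the fold $x=3$ carries the state to the point on the $x>3$ branch at the same height, which one computes to be $x=4$ say... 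I would pin down these four $x$-values cleanly from the factored cubic and the fold condition, giving the traversals $x:4\to 3$ (contributing $T_1$) and $x:0\to 1$ — here the jump at the fold $x=1$ must land at $x=0$, again checked against the cubic's level set — contributing $T_2$. Then $T_1 = \int_4^3 dx/\dot x$ and $T_2 = \int_0^1 dx/\dot x$ with $\dot x$ as above, which is exactly the claimed pair of integrals after cancelling $\eta_1$ into the denominator.

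Finally I would address the role of the large parameter $\eta_2$: the subsystem $\sum_{uv}$ is slaved to $x$ and, for $\eta_2$ large, $u$ and $v$ equilibrate essentially instantaneously to their quasi-steady values determined by the current value of $x$, so $u$ and $v$ inherit the period of $x$ exactly; this is where the hypothesis ``large $\eta_2$'' is consumed, and I would state it as a quasi-steady-state reduction of $\sum_{uv}$ rather than prove a sharp timescale estimate.

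The main obstacle is making the fold-point bookkeeping rigorous — correctly identifying which attracting branch each fast jump lands on and hence the exact integration limits $4,3,0,1$ — together with the standard but technical justification (via Fenichel theory / the analysis in \cite{krupa2001relaxation}) that the $O(\epsilon)$ corrections to the period from the boundary layers and from the $O(\epsilon)$ drift along the slow manifold are negligible; this is why the statement is phrased with ``$\approx$'' and the qualifier ``$\epsilon$ small enough''.
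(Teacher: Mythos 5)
Your proposal is essentially correct and reaches the stated integrals, but it takes a genuinely different route from the paper. The paper does not rederive the slow--fast reduction: it quotes the exact period formulas of Theorem 4.2 in \cite{shi2022design}, which carry correction terms $\psi_{1},\psi_{2}$ measuring the deviation of the true periodic orbit from the critical manifold, and then argues that for $\epsilon$ small enough these terms can be dropped, after which the abstract limits $x_{A},x_{m},x_{C},x_{M}$ specialize to $4,3,0,1$ for $f(x)=-x^{3}+6x^{2}-9x+5$. Your first-principles derivation of the reduced slow equation $\dot x=\eta_{1}(x-\rho)f(x)/f'(x)$ and of the period as $\int f'(x)\,dx/\bigl(\eta_{1}(x-\rho)f(x)\bigr)$ over the two attracting branches is exactly the leading-order content of that cited theorem, so your argument is more self-contained, while the paper's approach buys (by citation) the rigorous control of the $O(\epsilon)$ boundary-layer and drift errors that you correctly flag as the technical obstacle. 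The treatment of $u$ and $v$ is the same in both: a quasi-steady-state slaving argument for large $\eta_{2}$ (the paper additionally invokes the large parameter $c$ to identify the slaved values with $\max(p-x,0)$ and $\max(x-p,0)$, but that is irrelevant to the period itself).

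One concrete correction to the fold bookkeeping you left unresolved: the folds are at $(1,f(1))=(1,1)$ (local minimum of $f$) and $(3,f(3))=(3,5)$ (local maximum). Since $f(x)=-(x-1)^{2}(x-4)+1=-x(x-3)^{2}+5$, the horizontal jump off the fold at $x=1$ (at height $y=1$) lands at $x=4$ on the right attracting branch, and the jump off the fold at $x=3$ (at height $y=5$) lands at $x=0$ on the left attracting branch --- not the assignments you wrote in places (you attach $x=4$ to the jump off $x=3$ and $x=0$ to the jump off $x=1$). On the right branch $x>\rho$ forces $y$ to increase, so $x$ runs from $4$ down to $3$; on the left branch $x<\rho$ forces $y$ to decrease, so $x$ runs from $0$ up to $1$. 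This confirms the integration limits $\int_{4}^{3}$ and $\int_{0}^{1}$ that your final formulas (correctly) use, so the slip does not propagate into the result, but it should be fixed before the argument could be called complete.
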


\begin{proof}
 Theorem 4.2 in (\cite{shi2022design}) provides a strict form of period of $x$ as
  \begin{align*}
        T_{1}= \int_{x_{A}}^{x_{m}}\frac{(f'(x)-\frac{\mathrm{d} \psi _{1}}{\mathrm{d} x}(x,\epsilon ))dx}{\eta_{1}(x+\mu(f(x)-\psi _{1}(x,\epsilon ))+\lambda)(f(x)-\psi _{1}(x,\epsilon ))}\ ,
    \end{align*}
    \begin{align*}
       T_{2}= \int_{x_{C}}^{x_{M}}\frac{(f'(x)+\frac{\mathrm{d} \psi _{2}}{\mathrm{d} x}(x,\epsilon ))dx}{\eta_{1}(x+\mu(f(x)+\psi _{2}(x,\epsilon ))+\lambda)(f(x)+\psi _{2}(x,\epsilon ))}\ ,
    \end{align*}
which corresponds to subsystem $\sum_{xy}$ as
    \begin{align*}
    \frac{\mathrm{d} x}{\mathrm{d} t} &= \eta_{1}(f(x)-y)x/\epsilon\ , \\
    \frac{\mathrm{d} y}{\mathrm{d} t} &= \eta_{1}(x-\rho)y\ ,
    \end{align*}
and put some constraints on the expression of $f(x)$. While in this paper we uniformly let $f(x)=-x^3+6x^2-9x+5$, then the lower and upper bounds of the integral also correspond to specific values. Terms about $\psi_{i}(i=1,2)$ are closely related to the periodic orbit of the relaxation oscillation, which are controlled by parameter $\epsilon$. As a rough estimate of the period, we make sure that the error caused by erasing these terms is acceptable by picking a small enough $\epsilon$. So the expression of $T_{1}$ and $T_{2}$ can be directly used as an approximate estimate of the period of $x$ in system (1). \\
\\
It is quite difficult to estimate the period of $u$ and $v$ directly in their subsystem, so we set two parameters to control the effect of coupling oscillation: Large enough $c$ makes every instantaneous equilibrium of $u$ (or $v$) in the subsystem approximate the maximum value between $p-x$ (or $x-p$) and 0. Large enough $\eta_{2}$ ensure that $u$ and $v$ response quickly towards the oscillation of $x$, showing almost synchronous changes.
In fact, subsystem generating $u$ and $v$ is a symmetrical modification to the subtraction module used in (\cite{vasic2020}), we can therefore expect our reaction module to converge exponentially quickly as they claimed, though neither we nor they found the analytical solution. However, under appropriate parameter declarations, we directly utilize $T_{1}$ and $T_{2}$ exhibited in Proposition 1 to estimate period of our oscillators $u$ and $v$. 
 $\hfill\blacksquare$
\end{proof}

What is clear by Proposition \ref{proposition1} is that under the structure of system (1), parameter $\rho$ influences the ratio $T_{1}/T_{2}$ and $T_{i}(i=1,2)$ is inversely proportional to $\eta_{1}$. Now let us come back to our original aim. We construct another system (1) with a doubled $\eta_{2}$, clock signals $U_{2}$ and $V_{2}$ emerge and whose period is exactly half the one of $U_{1}$ and $V_{1}$. 
The complete system designed to generate whole clock signals for three-module regulation is concluded as follows:
\begin{equation}
\begin{aligned}
    \frac{\mathrm{d} x_{1}}{\mathrm{d} t} &= \eta_{1}(-x_{1}^3+6x_{1}^2-9x_{1}+5-y_{1})x_{1}/\epsilon\ , \\
    \frac{\mathrm{d} y_{1}}{\mathrm{d} t} &= \eta_{1}(x_{1}-\rho)y_{1}\ , \\
    \frac{\mathrm{d} u_{1}}{\mathrm{d} t} &= \eta_{2}(p-u_{1}-cu_{1}v_{1})\ , \\
    \frac{\mathrm{d} v_{1}}{\mathrm{d} t} &= \eta_{2}(x_{1}-v_{1}-cu_{1}v_{1})\ , 
\end{aligned}
\end{equation}
\begin{equation}
\begin{aligned}
    \frac{\mathrm{d} x_{2}}{\mathrm{d} t} &= 2\eta_{1}(-x_{2}^3+6x_{2}^2-9x_{2}+5-y_{2})x_{2}/\epsilon\ , \\
    \frac{\mathrm{d} y_{2}}{\mathrm{d} t} &= 2\eta_{1}(x_{2}-\rho)y_{2}\ , \\
    \frac{\mathrm{d} u_{2}}{\mathrm{d} t} &= \eta_{2}(p-u_{2}-cu_{2}v_{2})\ , \\
    \frac{\mathrm{d} v_{2}}{\mathrm{d} t} &= \eta_{2}(x_{2}-v_{2}-cu_{2}v_{2})\ .
\end{aligned}
\end{equation}

System above generates two pairs of clock signals $U_{1}$, $V_{1}$ and $U_{2}$, $V_{2}$. To regulate the sequence of three modules, we put species $v_{1}$ into overall reactions in $Module_{1}$ as catalyst, both $U_{1}$ and $U_{2}$ into overall reactions in $Module_{2}$ and last, both $U_{1}$ and $V_{2}$ into overall reactions in $Module_{3}$ as catalyst. We present an example of module regulation, as well as accurate cycle termination in molecular computation in the next subsection. In the numerical simulation, we choose $\epsilon=0.001$ and $\eta_{2}=10$, with other parameters same in previous section, oscillation periods can be easily compared in Fig.4.

 \begin{figure}[ht]
 	\centerline{\includegraphics[width=\columnwidth]{./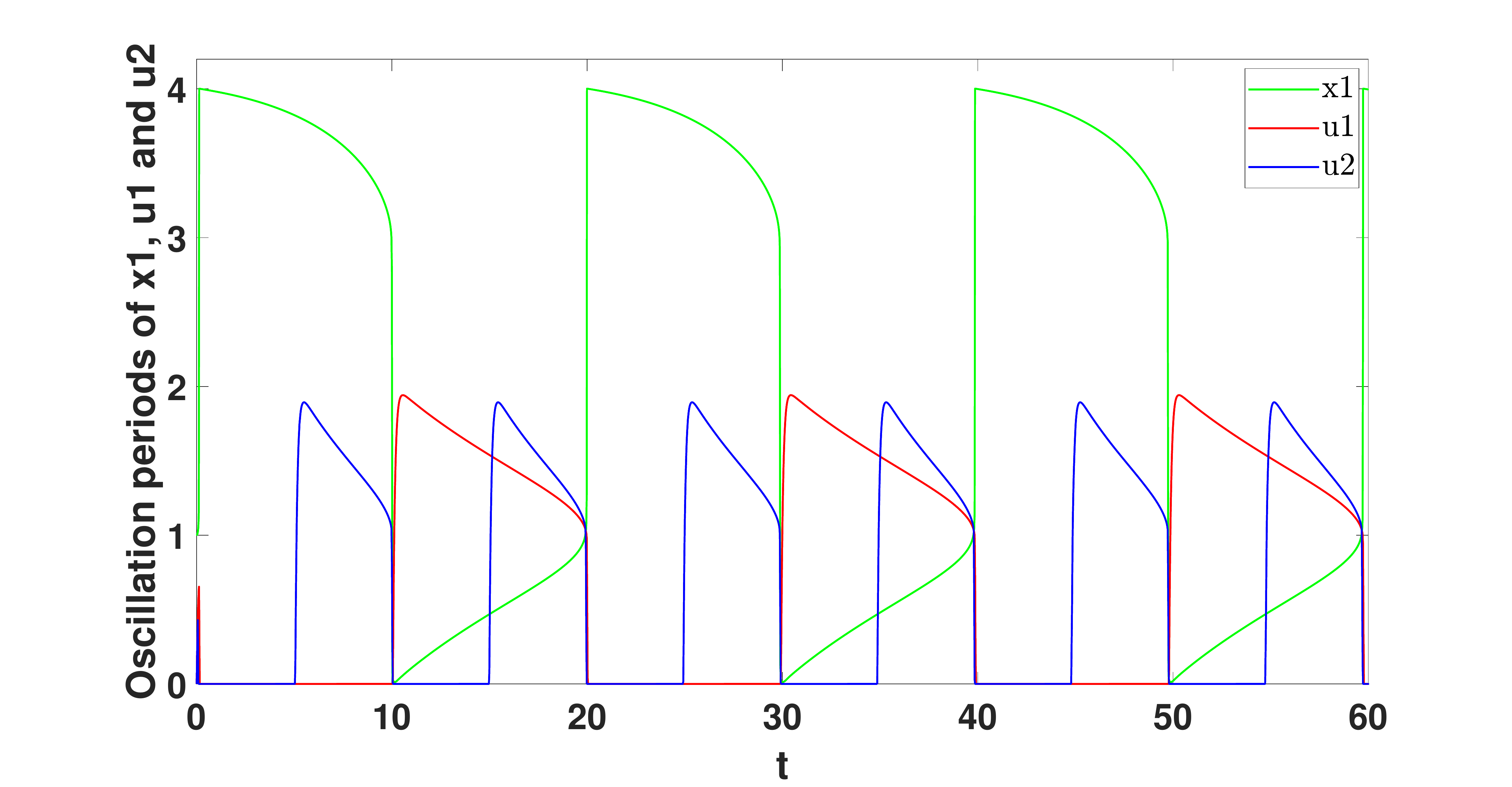}}
 	\caption{Compare between periods of $x$,$u$ and $v$.}
 	\label{fig4}
 \end{figure}

\subsection{Cycle Termination Based on Multi-module Regulation}
Cycle termination of chemical reaction modules means shutting down all reactions under appropriate circumstances. In the related work of molecular computation and designing deep learning networks through chemical reactions (\cite{9707599,vasic2020}), we usually intercept the concentration of some species in the system at a specific moment as the system output. So in addition to inserting in intervention signals to guide the reactions in order, we also need to provide a solution for shutting down all the reactions, and to some extent, forcing the reaction system to stop. Our cycle termination strategy is to track the number of cycles performed with a particular species, and when the number reaches a predetermined value, concentration of this species returns to zero, thereby shutting down all the reactions catalyzed by it.\\

In our previous work, we tried to make cycle of the two modules stop automatically after a preset number $n$ of executions, so we first constructed a truncated subtraction module with exponential velocity to complete convergence, this module was then used to produce a component that measures the instantaneous difference between the counting unit $x_{1}$ in our counter model and $n$, and finally we added this component into overall modules. After the given number $n$ of loops, the component would reach zero as we supposed, turning the whole system down. While, although the truncated subtraction module we designed maintained an exponential rate of convergence, it still caused that the component measuring the difference could not respond to the concentration increase of counting unit $x_{1}$ with no error, modules to be shut down would therefore continue to perform operations for a short time after the $n$th cycle. This paper provides a better approach to solve this problem.\\
\\
First, we recall our counter model and truncated subtraction module in previous work. 
We sort the following truncated subtraction module as $Module_{1}$. In this module, species $X$ is just the component to measure the difference between counter unit $Y$ and the given loop time $N$.
\begin{align*}
Module_{1}:
N+X&\rightarrow N+2X\ , \\
Y+X&\rightarrow Y\ , \\
2X&\rightarrow X\ .
\end{align*}
Our counter model consists of following two modules as $Module_{2}$ and $Module_{3}$, species $L$ acts as a constant, then result of the sequential execution of these two modules is the self-increment of $Y$'s value (often expressed as concentration in the context of chemistry). 
\begin{minipage}{0.5\columnwidth}
\begin{align*}
    Module_{2}:
Y&\rightarrow Y+Z\ , \\
L&\rightarrow L+Z\ , \\
Z&\rightarrow \varnothing \ .
\end{align*}
\end{minipage}
\begin{minipage}{0.5\columnwidth}
\begin{align*}
    Module_{3}:
Z&\rightarrow Y+Z\ , \\
Y&\rightarrow \varnothing \ .
\end{align*}
\end{minipage}
Then ODEs for this three-module model are concluded as follows:
 \begin{equation}
    \begin{aligned}
    \frac{\mathrm{d} x}{\mathrm{d} t}&= \eta_{3}(n-y-x)xv_{1}\ ,\\
    \frac{\mathrm{d} y}{\mathrm{d} t}&= \eta_{4}(z-y)u_{1}u_{2}x\ ,\\
    \frac{\mathrm{d} z}{\mathrm{d} t}&= \eta_{4}(y+1-z)u_{1}v_{2}x\ .
    \end{aligned}
\end{equation}
Elements $u_{1}$, $u_{2}$, $v_{1}$ and $v_{2}$ come from our oscillatory system, parameters $\eta_{3}$ and $\eta_{4}$ control convergence speed of corresponding ODE. Combine clock signals with modules to be regulated, with $\eta_{1}=0.1, \epsilon=0.001, \rho=2.1, \eta_{2}=10, p=2, c=5000, \eta_{3}=500, n=4, \eta_{4}=1$, and initial point $(x_{1}, y_{1}, u_{1}, v_{1}, x_{2}, y_{2}, u_{2}, v_{2}, x, y, z)=(1,1,0,0,1,1,0,0,4,0,0)$, we get the simulation diagram for counter unit $Y$ and auxiliary unit $X$ in Fig.5 (a). Our aim is to stop the counter unit $Y$ from self-increment after $n=4$ loops and compared with Fig.5 (b), which is our previous result in (\cite{shi2022design}), cycle termination in this paper has a more accurate result.

\begin{figure}[ht]
\subfloat[Result in this paper.]{\includegraphics[width=0.5\columnwidth]{./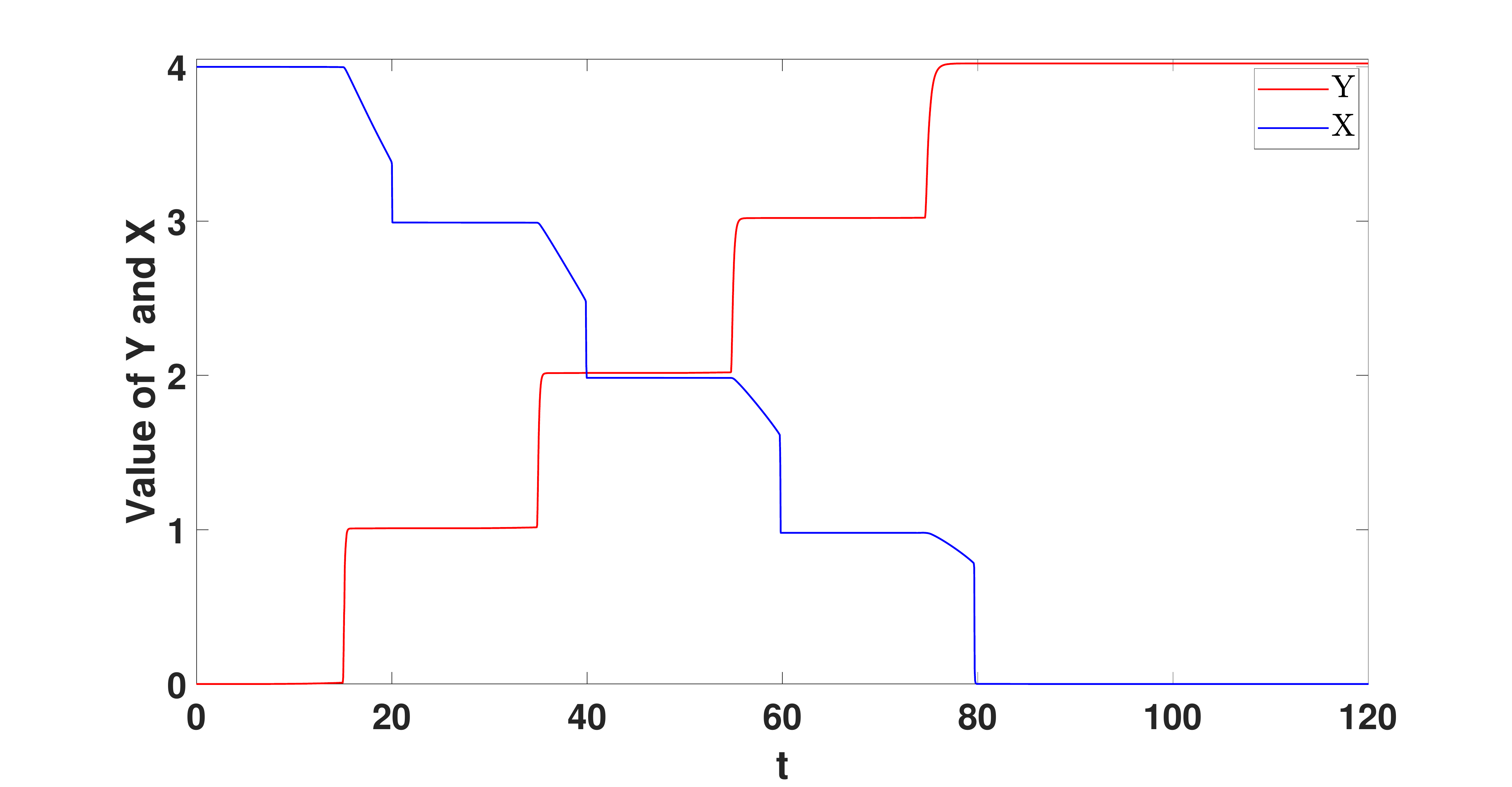}}
\hfill
\subfloat[Result of our previous work.]{\includegraphics[width=0.5\columnwidth]{./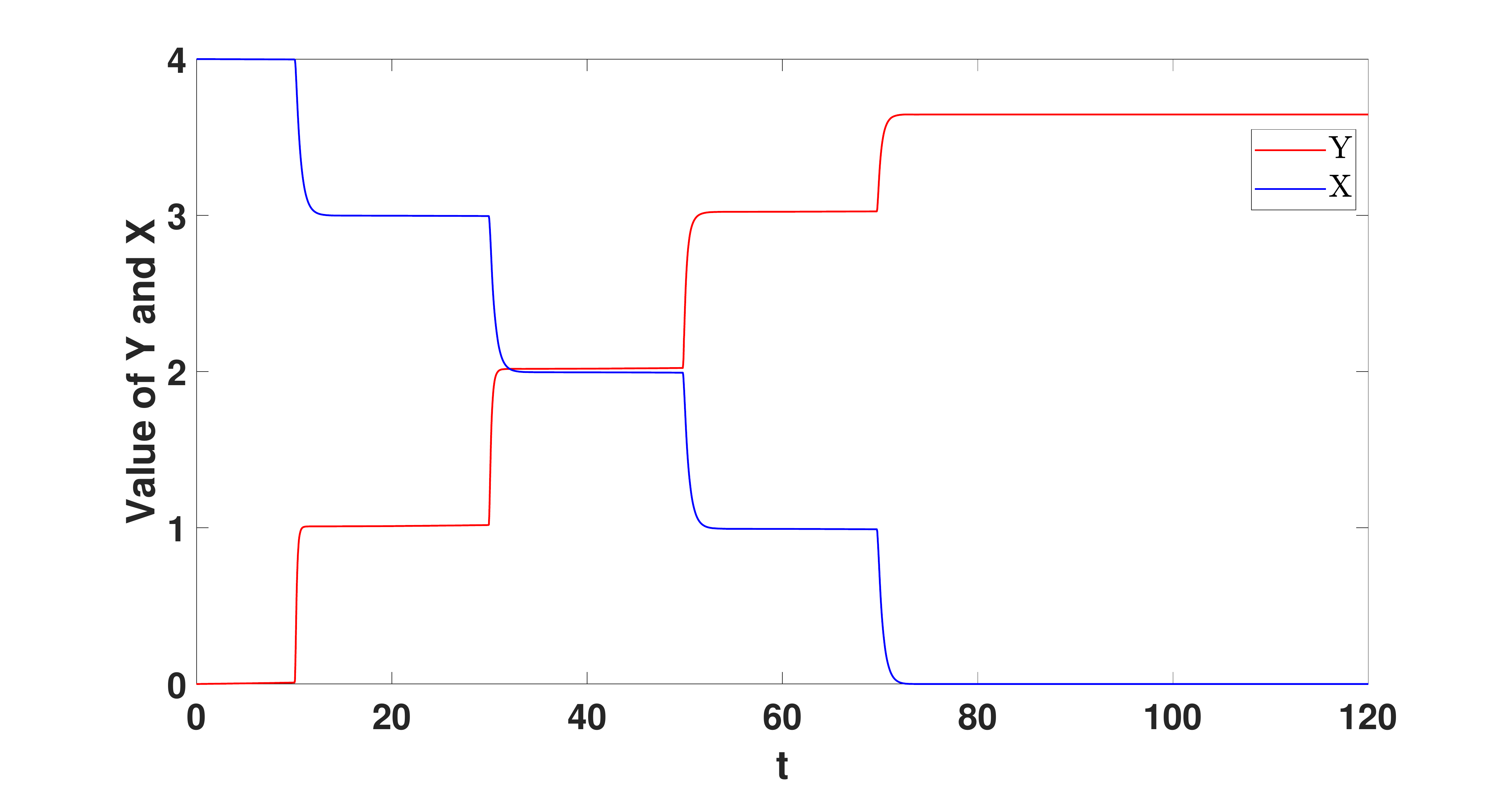}}
\caption{Simulation diagram for counter unit $Y$ and auxiliary unit $X$.}
\end{figure}

 In principle, our oscillator regulation mechanism is suitable for the regulation task of arbitrarily number of modules, and we outline the general points as following proposition:
\begin{proposition}
  For adjustment task of $m\ (m\in \mathbb{Z}_{+},\ m\geq 2)$ reaction modules:
  \begin{itemize}
      \item Our oscillator model uses system (1) as the underlying architecture to generate each pair of symmetric clock signals and system (4) as the cycle termination mechanism. Since we have to put in a separate module like the $Module_{1}$ to track the number of loops, we are actually dealing with regulation of $m+1$ modules sorted as $Module_{1}, Module_{2}, ..., Module_{m+1}$. So we need $m$ oscillators as system (2), system (3) and so on to fit together.
      \\
      \item Our oscillators decompose the multi-module regulation task and the $k$th oscillator is specifically used to regulate the execution order of $Module_{k}$ and the large module composed of $Module_{k+1}, ..., Module_{m+1}$ ($1\leq k\leq m$). And the period of these $m$ oscillators is halved successively, that is to say, the period of clock signals $U_k$ and $V_k$ should decay to $\frac{1}{2^{k-1}}$ of the one of $U_1$ and $V_1$.
      \\
      \item We continue to use species $Y$, $Z$ and $X$ mentioned in the previous subsection as the components for counting and terminating cycle, and let the clock signal $U_1$, $V_1$, $U_2$ and $V_2$ guide the normal operation of this counter model as system (4) in parallel while regulating the execution order of the target module. Thus, the cycle time of the complete $m+1$ modules is determined by the period of $U_1$ and $V_1$.
      \\
      \item Our clock signals participate in module regulation with the form of catalyst, so they do not affect the exponential convergence of the original reaction modules. 
  \end{itemize}
\end{proposition}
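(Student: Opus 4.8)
The plan is to argue by induction on $m$, using the explicit three--module construction of Section~III (systems~(2)--(4)) as the base case $m=2$ and peeling off one extra module together with one extra oscillator at each inductive step. For the base case I would first fix $\rho$ so that the two segments $T_{1},T_{2}$ of Proposition~\ref{proposition1} are equal, so ``half a period'' is literally half, and start every auxiliary oscillator from a common point of the first quadrant; then, up to a transient, system~(2) yields $U_{1},V_{1}$ of period $T:=T_{1}+T_{2}$ and system~(3) yields $U_{2},V_{2}$ of period $T/2$, and more generally the $k$th oscillator --- obtained from~(1) by replacing $\eta_{1}$ by $2^{k-1}\eta_{1}$ in its $x,y$--block --- has period $T/2^{k-1}$ with its period boundaries on the dyadic grid $\{\,jT/2^{k-1}\,\}$. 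This is exactly where Proposition~\ref{proposition1} is invoked: the period there depends on $\eta_{1}$ only through an overall factor $1/\eta_{1}$, so the rescaling changes nothing but the speed, and the relaxation orbit survives provided $\epsilon$ is small relative to $2^{m-1}\eta_{1}$.

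Next I would set up the recursion on the catalyst assignment. Reading the clock signals as Boolean switches --- $U_{k}$ ``on'' during the second half of the $k$th oscillator's period, $V_{k}$ ``on'' during its first half --- the windows $W_{1}=[0,T/2]$, $W_{k}=[\,T(1-2^{-(k-1)}),\,T(1-2^{-k})\,]$ for $2\le k\le m$, and $W_{m+1}=[\,T(1-2^{-m}),T\,]$ partition one period, and a module catalysed by $V_{k}$ (resp.\ $U_{k}$) together with $U_{1},\dots,U_{k-1}$ is active precisely on $W_{k}$ (resp.\ on the trailing block $W_{k+1}\cup\dots\cup W_{m+1}$). The inductive step is then transparent: the scheme for $m-1$ leaves a trailing block $W'$ carrying the last old module; introducing one more oscillator of period $T/2^{m-1}$ halves $W'$, and we assign $V_{m}$ (with $U_{1},\dots,U_{m-1}$) to the first half for $Module_{m}$ and $U_{m}$ (with $U_{1},\dots,U_{m-1}$) to the second half for $Module_{m+1}$. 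Coupling the counter ODEs~(4), which are driven only by $U_{1},V_{1},U_{2},V_{2}$, then executes exactly one self--increment of $Y$ per period $T$, so the cycle time of the full $m+1$ modules equals the period of $U_{1},V_{1}$, which is the content of the third bullet.

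Two analytic points remain. For the fourth bullet I would observe that on each active window a clock signal enters the module ODE only as a multiplicative factor $w(t)$ which is bounded above and bounded below away from zero there; hence the flow on that window is a bi--Lipschitz time reparametrisation $\tau=\int w\,dt$ of the uncatalysed module, so exponential contraction in $\tau$ --- the convergence established for the truncated--subtraction modules in~\cite{vasic2020} --- transfers to exponential contraction in $t$. For termination I would track $X$: in~(4), $\dot{x}=\eta_{3}(n-y-x)x v_{1}$, whose relevant equilibrium on $W_{1}$ is $x=\max(n-y,0)$, reached exponentially fast; starting from $x(0)=n$ and incrementing $y$ by one per period, $x$ descends through $n-1,n-2,\dots$ and reaches the (degenerate) value $0$ at the start of the $(n+1)$st period, after which, since $Module_{2},\dots,Module_{m+1}$ all carry $X$ as catalyst, every reaction except the now--inert $Module_{1}$ shuts off; that is the cycle termination.

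I expect the main obstacle to be the interaction of the two timescales on the exponentially shrinking windows: $W_{m+1}$ has length $T/2^{m}$, so the innermost modules settle before their window closes only if $\eta_{3},\eta_{4}\gg 2^{m}\eta_{1}$, while the relaxation orbit of the innermost oscillator still requires $\epsilon\ll 2^{-m}\eta_{1}$. These constraints are jointly satisfiable, but they turn the proposition into a ``for suitable parameters'' statement, and a fully rigorous version would have to quantify both the $O(\epsilon)$ error in Proposition~\ref{proposition1} and the residual tracking error of $u_{k},v_{k}$ behind $x_{k}$ and show it does not accumulate over the $m$ nesting levels. My intention is therefore to prove the combinatorial/recursive skeleton in full and to carry the analytic ingredients --- period, phase alignment, exponential convergence --- as the same hypotheses (``$\epsilon$ small enough, $\eta_{2}$ large enough, windows long enough'') already used for Proposition~\ref{proposition1}.
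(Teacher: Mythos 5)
First, a point of reference: the paper does not actually prove this proposition. Immediately after stating it the authors write that they ``omit the further explanation of above proposition,'' so the statement functions as a design summary supported only by the worked $m=2$ example and the simulations. Your proposal therefore attempts strictly more than the paper does, and your combinatorial skeleton --- induction on $m$, period halving via the $\eta_{1}\mapsto 2^{k-1}\eta_{1}$ rescaling justified by Proposition~\ref{proposition1}, the dyadic window partition $W_{1},\dots,W_{m+1}$, and the time-reparametrisation argument for the fourth bullet --- is faithful to the construction of systems (2)--(4) and is a reasonable way to make it precise. (One cosmetic mismatch: the paper assigns $U_{1},U_{2}$ to $Module_{2}$ and $U_{1},V_{2}$ to $Module_{3}$, the reverse of your assignment; which is correct depends on the relative phase, which brings me to the real problem.)

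The genuine gap is your claim that starting every oscillator ``from a common point of the first quadrant'' puts the period boundaries on the dyadic grid up to a transient. Because the $k$th $(x,y)$-block is exactly the first with $t\mapsto 2^{k-1}t$, starting both at the same point gives $x_{k}(t)=x_{1}(2^{k-1}t)$ identically; if $x_{1}$ switches at times $s_{0}+jT$, then $x_{k}$ switches at $s_{0}/2^{k-1}+jT/2^{k-1}$, a grid of the right spacing but with offset $s_{0}\bigl(1-2^{-(k-1)}\bigr)$ modulo $T/2^{k-1}$, which is generically nonzero. Mutual exclusion of the modules survives misalignment (since $U_{k}$ and $V_{k}$ are never simultaneously positive), but the window $W_{k}$ can be split across a period boundary, and for the counter this is fatal: a $U_{1}$-window in which $Module_{3}$, $Module_{2}$, $Module_{3}$ fire in that order increments $y$ twice in one period of $U_{1}$, breaking the third bullet and the termination count. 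A rigorous proof needs either an explicit phase condition on the initial data (choosing $s_{0}$, or starting the $k$th oscillator at the point $x_{1}$ reaches at time $s_{0}(1-2^{-(k-1)})$-equivalent phase) or a phase-locking/robustness argument; it cannot be absorbed into the ``$\epsilon$ small, $\eta_{2}$ large'' hypotheses you carry over from Proposition~\ref{proposition1}. A second, smaller inaccuracy: once $y=n$ the shutdown variable obeys $\dot{x}=-\eta_{3}x^{2}v_{1}$, so $x$ decays only algebraically and never reaches zero; the conclusion should be phrased, as in Definition~1, as ``close enough to zero,'' which is in fact exactly the residual imprecision the paper concedes it is trying to reduce rather than eliminate.
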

We omit the further explanation of above proposition and the complete expression of CRNs, and leave the related tasks of the actual chemical experiments to the DNA strand displacement cascades (\cite{soloveichik2010dna}).

\section{Conclusion}
Design of our chemical oscillators is modular to a certain extent, subsystem exhibiting relaxation oscillation assembles with truncated subtraction modules, producing clock signals we want. Our oscillators can be viewed as a control for synthetic biology with the idea of engineering modularity (\cite{qian2018programming}), or exactly run and stop chemical reactions. And different from the model mentioned in (\cite{vasic2020}) and (\cite{9707599}), we pay more attention on mechanism for generating oscillation. Our model is able to change the amplitude and period of oscillators according to the specific regulatory needs and is more robust towards the selection of parameters and initial points. \\
\\
This paper demonstrates an example of construct chemical oscillators for three modules by superimposing two relaxation oscillators whose periods are multiples of each other. What calls for special attention is that the periods of oscillators corresponding to the three modules are not exactly same. Actually, $Module_{1}$ has twice the execution time of $Module_{2}$ and $Module_{3}$. Setting their execution times to be exactly the same is difficult to achieve in our model, while it is also unnecessary. In practice we just need to make sure that the period of oscillator covers the time required for whole CRNs in each module to reach equilibrium , which can be guaranteed by controlling the parameters.\\ 
\\
Compared with our previous work, this paper extends the regulation mechanism applicable to two modules to any finite-multiple module regulation task. In principle, the regulation of multiple modules can be regarded as nesting of multiple sets of two-module regulation task, so that the corresponding clock signals can be constructed by our two-dimensional relaxation oscillation model. Since actual computational designs often involve the regulation of multiple modules (as we have demonstrated, even regulation of two modules is expanded to three modules to ensure accuracy), our generic design based on multiple modules is necessary. We sacrifice some precision in the estimation of the clock signal periods, although the results are still satisfactory from the simulation level. To enhance the accuracy, also in order to control the scale of whole oscillator model, we will try to build a better underlying oscillation structure in future work, including using a higher dimensional oscillator which can directly generate more groups of symmetrical clock signals to replace the two-dimensional relaxation oscillation, and find a more accurate calculation method of the periods.

\begin{ack}
This work was funded by the National Nature Science Foundation of China under Grant 12071428 and 62111530247, and the Zhejiang Provincial Natural Science Foundation of China under Grant LZ20A010002.
\end{ack}

\bibliography{ifacconf}             
                                                   







\end{document}